\def\l{\left}
\def\r{\right}
\def\bg{\bigg}
\def\({\bg(}
\def\){\bg)}
\def\t{\text}
\def\f{\frac}
\def\eq{\equiv}
\def\Z{\mathbb Z}
\def\N{\mathbb N}
\def\<{\langle}
\def\>{\rangle}
\def\1{{\bf 1}}
\def\qbinom #1#2#3{{\genfrac{[}{]}{0pt}{}{#1}{#2}}_{#3}}
\theoremstyle{plain}
\newtheorem{theorem}{Theorem}
\newtheorem{lemma}{Lemma}
\newtheorem{corollary}{Corollary}
\theoremstyle{definition}
\newtheorem*{Acks}{Acknowledgments}
\theoremstyle{remark}
\numberwithin{equation}{section}
\begin{document}
\title[A refined $q$-analogue of some congruences of Van Hamme]{A refined $q$-analogue of some congruences of Van Hamme}

\author[Chen Wang]{Chen Wang}
\address{Department of Applied Mathematics, Nanjing Forestry University, Nanjing 210037, People's Republic of China}
\email{cwang@smail.nju.edu.cn}

\author[Yu-Chan Tian]{Yu-Chan Tian}
\address{Department of Applied Mathematics, Nanjing Forestry University, Nanjing 210037, People's Republic of China}
\email{tyc@njfu.edu.cn}

\author[Kai Huang]{Kai Huang}
\address{Department of Applied Mathematics, Nanjing Forestry University, Nanjing 210037, People's Republic of China}
\email{kaih@njfu.edu.cn}

\begin{abstract}
In 1997, Van Hamme proposed 13 supercongruences corresponding to $1/\pi$ series of the  Ramanujan-type. Inspired by the recent work of V.J.W. Guo, we establish a unified $q$-analogue of Van Hamme's (B.2), (E.2) and (F.2) supercongruences, which is also a refinement of some known results obtained by different authors. As a consequence, we give a parametric supercongruence related to the Euler polynomials.
\end{abstract}

\subjclass[2020]{33D15, 11A07, 11B65}
\keywords{supercongruences; $q$-analogues; cyclotomic polynomials; $q$-WZ pair}

\maketitle

\section{Introduction}

In 1997, Van Hamme \cite{VH} proposed 13 supercongruences corresponding $1/\pi$ series of the Ramanujan-type, such as
\begin{align}
\sum^{(p-1)/2}_{k=0}(-1)^k(4k+1)\frac{(\frac{1}{2})_k^3}{k!^3} &\equiv p(-1)^{(p-1)/2} \pmod{p^3},\label{b2} \\
\sum_{k=0}^{(p-1)/3} (-1)^k(6k+1)\frac{(\frac{1}{3})_k^3}{k!^3} &\equiv p\pmod{p^3}\quad\text{for}\ p\equiv 1\pmod{3},  \label{e2}\\[5pt]
\sum_{k=0}^{(p-1)/4}(-1)^k(8k+1)\frac{(\frac{1}{4})_k^3}{k!^3} &\equiv p(-1)^{(p-1)/4}\pmod{p^3} \quad \text{for}\ p\equiv 1\pmod{4}, \label{f2}
\end{align}
(tagged (B.2), (E.2), and (F.2), respectively), where $p$ is an odd prime and $(x)_k=x(x+1)\cdots(x+k-1)$ denotes the Pochhammer symbol. All of the 13 supercongruences have been confirmed via using different techniques. In particular, in 2008, the first proof of \eqref{b2} was provided by Mortenson \cite{Mortenson} with the help of Whipple's ${}_7F_6$ transformation and reduction of Gamma functions.  in 2009, \eqref{b2} was reproved by Zudilin \cite{Zudilin} using the WZ (Wilf-Zeilberger) method (cf. \cite{PWZ}). In 2011, \eqref{b2} was reproved again by Long \cite{Long} using a ${}_4F_3$ identity due to Whipple and the $p$-adic Gamma function. Utilizing the same method as the one of Long, Swisher \cite{Swisher} proved \eqref{e2} and \eqref{f2} later.

For any odd prime $p$, it is easy to see that $(1/2)_k\eq0\pmod{p}$ for $k$ in the range $(p+1)/2\leq k\leq p-1$. Therefore, it trivially holds that
$$
\sum^{p-1}_{k=0}(-1)^k(4k+1)\frac{(\frac{1}{2})_k^3}{k!^3}\eq \sum^{(p-1)/2}_{k=0}(-1)^k(4k+1)\frac{(\frac{1}{2})_k^3}{k!^3}\pmod{p^3}.
$$
Surprisingly, Sun \cite{Sun} showed that the above congruence holds modulo $p^4$ for $p>3$. More precisely, employing the WZ method, Sun proved that for any prime $p>3$,
\begin{equation}\label{Suncon}
\sum_{k=0}^{M}(-1)^k(4k+1)\frac{(\frac{1}{2})_k^3}{k!^3}\eq p(-1)^{(p-1)/2}+p^3 E_{p-3}\pmod{p^4},
\end{equation}
where $M=p-1$ or $(p-1)/2$, and $E_{n}$ is the $n$-th Euler number. Clearly, \eqref{Suncon} is a refinement of \eqref{b2}.

Recently, with the help of the WZ method, Guo and Wang \cite{GW2025} established the following parametric supercongruence that gives a unified extension of \eqref{b2}--\eqref{Suncon}: for any prime $p>3$ and $p$-adic integer $\alpha$,
\begin{equation}\label{guowangeq}
\sum_{k=0}^{M}(-1)^k(2k+\alpha)\f{(\alpha)_k^3}{(1)_k^3}\eq (-1)^{\langle -\alpha\rangle_p}(\alpha+\langle -\alpha\rangle_p)+(\alpha+\langle -\alpha\rangle_p)^3E_{p-3}(\alpha)\pmod{p^4},
\end{equation}
where $M=p-1$ or $\<-\alpha\>_p$ (the least nonnegative residue of $-\alpha$ modulo $p$), and $E_n(x)$ is the $n$-th Euler polynomial.

Two continue the story, we need to be familiar with the $q$-notation. The $q$-integer is defined by $[n]=[n]_q=(1-q^n)/(1-q)$, and the $q$-Pochhammer symbol ($q$-shifted factorial) is defined as $(a;q)_n=(1-a)(1-aq)\cdots(1-aq^{n-1})$ for $n=1$ and $(a;q)_0=1$. For the sake of brevity, we adopt the following simplified notation:
$$
(a_1,a_2,\ldots,a_s;q)_n=(a_1;q)_n(a_2;q)_n\cdots(a_s;q)_n.
$$
The $n$-th cyclotomic polynomial is defined as
$$
\Phi_n(q)=\prod_{\substack{1\leq k\leq n\\ \gcd(k,n)=1}}(q-\zeta^k),
$$
where $\zeta=e^{2\pi i/n}$ is an $n$-th primitive root of unity. It is known that $\Phi_n(q)$ is an irreducible polynomial in the ring of polynomials with integer coefficients, and
$$
q^n-1=\prod_{d\mid n}\Phi_d(q).
$$

During the past few years, $q$-analogues of Van Hamme's supercongruences or their generalizations have been widely studied (see, e.g., \cite{G2018,G2019,G2022,GS2020,GuoZu,GuoZu2021,LiuWang,NiWang,Wei2021,Wei2023}). For instance, Guo \cite{G2018} obtained the following $q$-analogue of \eqref{b2}: for any odd integer $n>1$,
\begin{equation}\label{Guob2eq}
\sum_{k=0}^{(n-1)/2}(-1)^kq^{k^2}[4k+1]\f{(q;q^2)_k^3}{(q^2;q^2)_k^3}\eq (-1)^{(n-1)/2}q^{(n-1)^2/4}[n]\pmod{[n]\Phi_n(q)^2}.
\end{equation}
In 2019, Guo \cite{G2019} considered parametric extensions of \eqref{Guob2eq} and gave a unified $q$-analogue of \eqref{b2}--\eqref{f2}. For example, he proved that
$$
\sum_{k=0}^{(n-r)/d}(-1)^kq^{d\binom{k+1}{2}-kr}[2dk+r]\f{(q^r;q^d)_k^3}{(q^d;q^d)_k^3}\eq (-1)^{(n-r)/d}q^{(n-r)(n-d+r)/2d}[n]\pmod{[n]\Phi_n(q)^2},
$$
where $n,d\in\Z^+$, $r\eq n\pmod{d}$ is an integer such that $n+d-nd\leq r\leq n$ and $\gcd(r,d)=1$ when $r\leq 0$. In 2022, Guo \cite{G2022} further studied the $q$-analogue of \eqref{Suncon} and obtained the following $q$-supercongruences: for any positive odd integer $n$,
\begin{align}\label{G2022eq}
\sum_{k=0}^{M}(-1)^k q^{k^2}[4k+1]\f{(q;q^2)_k^3}{(q^2;q^2)_k^3}&\eq (-1)^{(n-1)/2}q^{(1-n^2)/4}\l(q^{\binom{n}{2}}[n]+\f{(n^2-1)(1-q)^2}{24}[n]^3\r)\notag\\
&\quad+[n]^3\sum_{k=1}^{(n-1)/2}\f{q^k(q^2;q^2)_k}{[2k][2k-1](q;q^2)_k}\pmod{[n]\Phi_n(q)^3},
\end{align}
where $M=(n-1)/2$ or $n-1$. Letting $q\to 1$ and $n=p>3$, and using the fact
$$
\sum_{k=1}^{(p-1)/2}\f{4^k}{k(2k-1)\binom{2k}{k}}\eq 2E_{p-3}\pmod{p}
$$
due to Sun \cite{Sun2011}, one immediately obtains \eqref{Suncon}.

Inspired by the above work, it is natural to ask whether \eqref{guowangeq} has a $q$-analogue? The main purpose of this paper is to establish such a $q$-analogue.

\begin{theorem}\label{mainth}
Let $n$ and $d$ be positive integers with $\gcd(n,d)=1$. Let $r$ be an integer with $\gcd(r,d)=1$ and $m$ be the least nonnegative residue of $-r/d$ modulo $n$. Then
\begin{align*}
&\sum_{k=0}^M(-1)^kq^{d\binom{k+1}{2}-kr}[2dk+r]\f{(q^r;q^d)_k^3}{(q^d;q^d)_k^3}\\
&\quad \eq (-1)^mq^{d\binom{m}{2}+mr}\l([md+r]+[md+r]^3\sum_{k=1}^m\f{(-1)^kq^{d\binom{k+1}{2}}(1+q^{dk})}{[kd]^2}\r)\pmod{[n]\Phi_n(q)^3},
\end{align*}
where $M=m$ or $M=n-1$.
\end{theorem}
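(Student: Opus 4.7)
The plan is to apply the $q$-WZ method, extending the approach Guo used in \cite{G2022}. Let $F(k)$ denote the summand on the left-hand side. I will seek a $q$-hypergeometric companion $G(k)$ such that, identically in $q$, the telescoping relation
\begin{equation*}
F(k)=G(k+1)-G(k)+R(k)
\end{equation*}
holds, where $R(0)=0$ and for $k\gs1$
\begin{equation*}
R(k)=(-1)^m q^{d\binom{m}{2}+mr}[md+r]^3\cdot\f{(-1)^k q^{d\binom{k+1}{2}}(1+q^{dk})}{[kd]^2}.
\end{equation*}
Such a $G(k)$ can be sought via the $q$-Gosper algorithm or by making an educated ansatz of the form $G(k)=c(q,k)\cdot(q^r;q^d)_k^3/(q^d;q^d)_{k-1}^3$ for a rational factor $c$. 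Once the pair $(F,G)$ is in hand, summing from $0$ to $m$ yields
\begin{equation*}
\sum_{k=0}^m F(k)=G(m+1)-G(0)+\sum_{k=1}^m R(k),
\end{equation*}
and the boundary piece $G(m+1)-G(0)$ should collapse to $(-1)^m q^{d\binom{m}{2}+mr}[md+r]$, giving the $M=m$ case as an exact identity.

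To handle $M=n-1$, it suffices to prove
\begin{equation*}
\sum_{k=m+1}^{n-1}F(k)\eq 0\pmod{[n]\Phi_n(q)^3}.
\end{equation*}
Since $md+r\eq0\pmod n$, the factor $1-q^{r+md}$ in $(q^r;q^d)_k$ contributes $\Phi_n(q)$ for every $k\gs m+1$, so each summand already carries $\Phi_n(q)^3$. Writing $k=m+1+j$ with $0\ls j\ls n-m-2$, the tail becomes $(1-q^{r+md})^3$ times a shifted hypergeometric sum with no further $\Phi_n$ factors; its divisibility by $\Phi_n(q)$ (upgrading to $\Phi_n(q)^4$) and by $\Phi_e(q)$ for every $e\mid n$ with $1<e<n$ will be established by evaluating at primitive roots of unity and invoking Guo's unified $q$-analogue \cite{G2019} of the Van Hamme supercongruences as a base case.

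The main obstacle will be pinning down the explicit companion $G(k)$: the symmetric numerator $1+q^{dk}$ and the square denominator $[kd]^2$ in $R(k)$ tightly constrain its shape, and recovering the exact $q$-power $d\binom{m}{2}+mr$ in the leading boundary term requires careful bookkeeping of the factors accumulated through the telescoping of $q$-shifted factorials. Once $G$ is determined, the remaining manipulations are routine exercises in cyclotomic $q$-algebra and root-of-unity substitutions.
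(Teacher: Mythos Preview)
There is a genuine gap in the plan: the one-variable telescoping identity you hope for cannot exist. Your proposed remainder $R(k)$ carries the prefactor $(-1)^m q^{d\binom{m}{2}+mr}[md+r]^3$, which depends on the upper limit $m$, whereas the summand $F(k)$ does not involve $m$ at all. An identity $F(k)=G(k+1)-G(k)+R(k)$ holding \emph{identically in $q$ for all $k$} would force $G(k+1)-G(k)$ itself to depend on $m$, and no $q$-Gosper--type ansatz $G(k)=c(q,k)\,(q^r;q^d)_k^3/(q^d;q^d)_{k-1}^3$ can produce that. More to the point, the $M=m$ case of the theorem is \emph{not} an exact identity: one can check already for small $m$ that $\sum_{k=0}^m F(k)$ and the stated right-hand side differ by a nonzero multiple of $\Phi_n(q)^4$. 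So the boundary piece $G(m+1)-G(0)$ cannot ``collapse to $(-1)^m q^{d\binom{m}{2}+mr}[md+r]$'' exactly.

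What the paper actually does is use the \emph{two}-variable $q$-WZ pair $F(k,l),G(k,l)$ from \cite{G2019}, telescope first in $k$ and then in $l$, and obtain the exact identity
\[
\sum_{k=0}^m F(k,0)=F(m,m)+\sum_{l=1}^m G(m+1,l).
\]
The congruence only enters afterwards: $F(m,m)$ and each $G(m+1,l)$ are evaluated separately modulo $[n]\Phi_n(q)^3$ (Lemmas~\ref{Fmm} and~\ref{G(m+1,k)}), and a further $q$-identity discovered by \verb"Sigma" (Lemma~\ref{identity}) is needed to rewrite $\sum_l G(m+1,l)$ in the shape $\sum_k (-1)^k q^{d\binom{k+1}{2}}(1+q^{dk})/[dk]^2$. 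For the passage from $M=m$ to $M=n-1$, the paper does not simply read off $\Phi_n(q)^3$ term-by-term and upgrade; it introduces an auxiliary parameter $a$ (the creative microscope), proves the two truncations agree modulo $[n]\Phi_n(q)(a-q^{md+r})(1-aq^{md+r})$ via Jackson's ${}_6\phi_5$ summation and a symmetry argument, and then specializes $a\to1$ to extract the extra $\Phi_n(q)^2$. Your sketch omits both the second WZ variable and the $a$-parameter, and those are precisely the ingredients that make the argument go through.
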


When $n=1$, Theorem \ref{mainth} holds trivially. Therefore, in the subsequent proofs, we always suppose $n>1$.

Let $p>3$ be a prime and take $n=p$. From \cite{GW2025}, we know
$$
\sum_{k=1}^m\f{(-1)^k}{k^2}\eq\f{(-1)^m}{2}E_{p-3}\l(\f{r}{d}\r)\pmod{p}.
$$
Therefore, letting $q\to 1$ in Theorem \ref{mainth}, we have
$$
\sum_{k=0}^{M}(-1)^k(2dk+r)\f{(\f{r}{d})_k^3}{k!^3}\eq (-1)^m(md+r)+\f{(md+r)^3}{d^2}E_{p-3}\l(\f{r}{d}\r)\pmod{p^4},
$$
where $M=p-1$ or $\<-r/d\>_p$. This means Theorem \ref{mainth} is indeed a $q$-analogue of \eqref{guowangeq}.

When $n$ is a prime power, we can deduce the following result from Theorem \ref{mainth}.

\begin{corollary}
Let $p>3$ be a prime and let $s,\ d$ a positive integer with $p\nmid d$. Let $r$ be an integer with $\gcd(r,d)=1$ and $m$ be the least nonnegative residue of $-r/d$ modulo $p^s$. Then
\begin{equation}\label{Cor1eq1}
\sum_{k=0}^{M}(-1)^k(2dk+r)\f{(\f{r}{d})_k^3}{k!^3}\eq (-1)^m(md+r)+\f{2(-1)^m(md+r)^3}{d^2}\sum_{k=1}^m\f{(-1)^k}{k^2}\pmod{p^{s+3}},
\end{equation}
where $M=p^s-1$ or $m$.
\end{corollary}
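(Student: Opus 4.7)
The plan is to specialise Theorem~\ref{mainth} to $n=p^s$ and then let $q\to 1$. Since $p\nmid d$ we have $\gcd(p^s,d)=1$, so Theorem~\ref{mainth} applies with exactly the same $m$ (the least nonnegative residue of $-r/d$ modulo $p^s$), and the two admissible upper bounds $M=n-1=p^s-1$ and $M=m$ match those appearing in \eqref{Cor1eq1}. No further hypotheses on $r$ are needed beyond $\gcd(r,d)=1$, which is assumed.

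Next I would compute the pointwise limits at $q=1$ on both sides of the congruence in Theorem~\ref{mainth}, using only the elementary facts $[j]_q\to j$, $q^N\to 1$, and $(q^a;q^b)_k/(1-q)^k\to b^k(a/b)_k$. On the left each summand
$(-1)^kq^{d\binom{k+1}{2}-kr}[2dk+r](q^r;q^d)_k^3/(q^d;q^d)_k^3$
collapses to $(-1)^k(2dk+r)(r/d)_k^3/k!^3$, recovering the left-hand side of \eqref{Cor1eq1}. On the right, the prefactor $(-1)^m q^{d\binom{m}{2}+mr}[md+r]$ tends to $(-1)^m(md+r)$, while each inner summand $(-1)^kq^{d\binom{k+1}{2}}(1+q^{dk})/[kd]^2$ tends to $2(-1)^k/(d^2k^2)$. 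Assembling yields precisely
\[(-1)^m(md+r)+\f{2(-1)^m(md+r)^3}{d^2}\sum_{k=1}^{m}\f{(-1)^k}{k^2},\]
which is the right-hand side of \eqref{Cor1eq1}.

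Finally, I would identify the modulus. The elementary evaluations $[p^s]_q|_{q=1}=p^s$ and $\Phi_{p^s}(q)|_{q=1}=p$ give $[n]\Phi_n(q)^3|_{q=1}=p^{s+3}$. The main obstacle --- routine but essential --- is to justify that substituting $q=1$ into the divisibility of Theorem~\ref{mainth} yields a legitimate $p$-adic congruence, i.e.\ that the cofactor $C(q)$ in $\text{LHS}(q)-\text{RHS}(q)=[n]\Phi_n(q)^3\,C(q)$ is regular and $p$-integral at $q=1$. Since $r/d\in\Z_p$, each term of the left-hand limit equals $(-1)^k(2dk+r)\binom{r/d+k-1}{k}^3$ and so lies in $\Z_p$; and since $v_p(md+r)\geq s$, the factor $(md+r)^3$ absorbs any $p$-adic poles from $\sum_{k=1}^{m}(-1)^k/k^2$, making the right-hand limit $p$-integral as well. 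With both specialised sides in $\Z_p$, $C(q)$ is regular at $q=1$ with value in $\Z_p$, and setting $q=1$ delivers the congruence \eqref{Cor1eq1} modulo $p^{s+3}$.
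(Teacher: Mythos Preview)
Your overall plan---specialise Theorem~\ref{mainth} to $n=p^{s}$ and let $q\to 1$---is precisely what the paper does (it gives no more than the sentence ``we can deduce the following result from Theorem~\ref{mainth}''), and your limit computations together with $[p^{s}]\Phi_{p^{s}}(q)^{3}\big|_{q=1}=p^{s+3}$ are all correct.

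The gap is in your last paragraph. You argue that because $\mathrm{LHS}(1)$ and $\mathrm{RHS}(1)$ both lie in $\Z_{p}$, the cofactor satisfies $C(1)\in\Z_{p}$. This is circular: $C(1)\in\Z_{p}$ is \emph{equivalent} to $\mathrm{LHS}(1)\equiv\mathrm{RHS}(1)\pmod{p^{s+3}}$, which is exactly \eqref{Cor1eq1}; merely knowing that both sides are $p$-integral gives only $C(1)\in p^{-(s+3)}\Z_{p}$. The valid justification is structural rather than numerical. Every term on either side of Theorem~\ref{mainth} is, up to a power of $q$, a ratio of products of factors $1-q^{a}$, so its reduced denominator is a product of cyclotomic polynomials $\Phi_{t}(q)$. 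A multiplicity count of the same type as in Lemmas~\ref{A(q)/B(q)}--\ref{Ckq/Dkq} (together with the elementary observation that $(q^{r};q^{d})_{k}/(q^{d};q^{d})_{k}$ has nonnegative $\Phi_{t}$-multiplicity for every $t\mid n$) shows that only $\Phi_{t}$ with $t\nmid p^{s}$ survive in these denominators; and since every exponent $jd$ occurring satisfies $j\le p^{s}-1$ while $p\nmid d$, no surviving $t$ can be a power of $p$. For such $t$ one has $\Phi_{t}(1)\in\Z_{p}^{\times}$, so the reduced denominator of $C(q)$ is a $p$-adic unit at $q=1$, whence $C(1)\in\Z_{p}$. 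With this repair your argument goes through.
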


In fact, via a similar argument as in the proof of \cite[Lemma 2.1]{WangHu}, we can further reduce the second term on the right-hand side of \eqref{Cor1eq1} to a term involving the Euler polynomials. We leave this work to the interested reader.

We shall prove Theorem \ref{mainth} in next section by using the $q$-WZ method (cf. \cite{PWZ}), together with a $q$-identity proved by the symbolic summation package \verb"Sigma" developed by Schneider \cite{S} in Mathematica.

\section{Proof of Theorem \ref{mainth}}

\begin{lemma}\label{symmetry-3}
Under the conditions of Theorem \ref{mainth}, we have
\begin{align*}
&3d\binom{n-m-1}{2}+d\binom{n-k}{2}+dk-2d-2kr+2r\\
&\quad\eq d\binom{m+k+2}{2}-(m+1+k)r\pmod{n}.
\end{align*}
\end{lemma}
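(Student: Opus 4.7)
The plan is to verify the congruence by a direct polynomial identity modulo $n$. The only hypothesis I need is the defining property of $m$: since $m$ is the least nonnegative residue of $-r/d$ modulo $n$ and $\gcd(n,d)=1$, we have $dm+r\eq 0\pmod{n}$, and this is the only arithmetic input the proof will use.

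To dispose of the binomial coefficients containing $n$, I will apply the exact integer identity $2\binom{n-a}{2}=a(a+1)+n(n-2a-1)$, valid for every integer $a\gs 0$. Taking $a=m+1$ and $a=k$, this rewrites $3d\binom{n-m-1}{2}+d\binom{n-k}{2}$ in terms of $\binom{m+2}{2}$ and $\binom{k+1}{2}$ together with manifest multiples of $n$. To avoid the halves coming from the binomial coefficients, I will first multiply the entire target congruence by $2$ and work at the integer level. After substituting and collecting terms, I expect all monomials in $m,k,r,d$ to cancel except for a clean factorization; the anticipated identity is
$$
2(\t{LHS}-\t{RHS})=2(m+3-k)(dm+r)+2dn(2n-3m-k-5).
$$
Dividing by $2$, which is legitimate since both sides are visibly even integers, yields
$$
\t{LHS}-\t{RHS}=(m+3-k)(dm+r)+dn(2n-3m-k-5).
$$
The second summand is divisible by $n$ trivially; the first is divisible by $n$ by the definition of $m$. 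The stated congruence follows at once.

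The argument is mechanical and presents no conceptual obstacle; the main risk is simply bookkeeping, since a number of monomials in $m,k,r,d$ must be tracked before the factor $(m+3-k)(dm+r)$ emerges. I plan to organize the calculation by grouping coefficients of $m^2$, $mk$, $k^2$, $m$, $k$, $mr$, $kr$, $r$, and the constant term, so that the cancellation into the neat form above is transparent, and then invoke $dm+r\eq 0\pmod{n}$ only at the very last step.
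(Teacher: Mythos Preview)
Your proposal is correct and follows essentially the same approach as the paper: both arguments expand the difference $\text{LHS}-\text{RHS}$ directly and then invoke only the relation $dm+r\equiv 0\pmod{n}$ to conclude. Your explicit factorization $\text{LHS}-\text{RHS}=(m+3-k)(dm+r)+dn(2n-3m-k-5)$ is a tidy way to package the same calculation the paper does term by term, but it is the same method, not a different one.
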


\begin{proof}
Noting that $md\eq -r\pmod{n}$, we obtain
\begin{align*}
&3d\binom{n-m-1}{2}+d\binom{n-k}{2}+dk-2d-2kr+2r-d\binom{m+k+2}{2}+(m+1+k)r\\
&\quad=-2 kr + 3 d m - kd m + d m^2 - 5 d n - kd n - 3 d m n + 2 d n^2 +
 3 r + k r + m r\\
&\quad\eq-2kr-3r+kr-mr+3r+kr+mr\\
&\quad\eq0\pmod{n}.
\end{align*}
This concludes the proof.
\end{proof}

\begin{lemma}\label{modPhi^2}
Under the conditions of Theorem \ref{mainth}, we have
\begin{equation*}
\sum_{k=m+1}^{n-1}(-1)^kq^{d\binom{k+1}{2}-kr}[2dk+r]\f{(aq^r,q^r/a,q^r;q^d)_k}{(aq^d,q^d/a,q^d;q^d)_k}\eq0\pmod{\Phi_n(q)^2},
\end{equation*}
where $a$ is an indeterminate.
\end{lemma}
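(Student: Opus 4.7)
The plan is to establish the divisibility in two stages: first to show that each individual summand is divisible by $\Phi_n(q)$, then to upgrade this to $\Phi_n(q)^2$ via a pairing argument in the spirit of many similar lemmas in the $q$-supercongruence literature. For the first stage, fix $k$ with $m+1\le k\le n-1$. Since $md+r\equiv 0\pmod n$ by the definition of $m$, the factor $(1-q^{r+md})$ appears in $(q^r;q^d)_k$ and is divisible by $\Phi_n(q)$. On the other hand, $\gcd(n,d)=1$ forces $n\nmid jd$ for $1\le j\le n-1$, so the denominator $(q^d;q^d)_k$ carries no $\Phi_n(q)$ factor, and the $a$-dependent factors in numerator and denominator contribute nothing extra for generic $a$. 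Hence each summand is divisible by exactly one power of $\Phi_n(q)$, contributed solely by $(1-q^{r+md})$ in $(q^r;q^d)_k$.

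To promote this to $\Phi_n(q)^2$, I would pair the summation index $k$ with $k'=n+m-k$, which is an involution on $[m+1,n-1]$ that swaps $m+1$ with $n-1$. If $n+m$ is even, the fixed point $k_0=(n+m)/2$ lies in this range, and $2dk_0+r=d(n+m)+r\equiv 0\pmod n$ gives $\Phi_n(q)\mid[2dk_0+r]$; combined with the $\Phi_n(q)$ from stage one, the middle term is divisible by $\Phi_n(q)^2$ on its own. For the remaining indices, I would prove the pairing identity $f_k(a)+f_{n+m-k}(a)\equiv 0\pmod{\Phi_n(q)^2}$, where $f_k(a)$ denotes the $k$-th summand. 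The two main tools are reflection identities of the shape $(x;q^d)_{n+m-k}=(x;q^d)_k\,(xq^{kd};q^d)_{n+m-2k}$, applied to each of the Pochhammer symbols in $f_{n+m-k}(a)$, together with Lemma \ref{symmetry-3}, which is designed precisely to match the $q$-exponents $d\binom{k+1}{2}-kr$ and $d\binom{n+m-k+1}{2}-(n+m-k)r$ modulo $n$, so that their difference can be absorbed into a power of $q^n$.

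The main obstacle will be verifying the pairing identity modulo $\Phi_n(q)^2$. Since each summand carries only one factor of $\Phi_n(q)$, this amounts to showing that, after dividing out $(1-q^{r+md})$ from $f_k(a)$ and $f_{n+m-k}(a)$ and evaluating at a primitive $n$-th root of unity $\zeta$, the two reduced quantities sum to zero. The extra products $(xq^{kd};q^d)_{n+m-2k}$ produced by reflection must combine with the parity factor $(-1)^{n+m-2k}$, the ratio $[2d(n+m-k)+r]/[2dk+r]$ at $q=\zeta$, and the $q$-power discrepancy absorbed via $\zeta^n=1$ (this is the role of Lemma \ref{symmetry-3}) to yield an overall factor of $-1$. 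The delicate part is tracking these pieces simultaneously for the $a$-, $1/a$-, and $a$-independent Pochhammer symbols; but because $\Phi_n(q)$ is irreducible and $a$ is treated as a free indeterminate, once the cancellation is checked for each primitive $n$-th root of unity, the stated congruence modulo $\Phi_n(q)^2$ follows.
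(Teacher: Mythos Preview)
Your plan is correct and essentially the same as the paper's: the paper also exploits the single factor $\Phi_n(q)\mid(1-q^{r+md})$ inside $(q^r;q^d)_k$ and the same involution (written there as $k\mapsto n-m-2-k$ after the shift $k\to k-(m+1)$), and Lemma~\ref{symmetry-3} is used exactly for the $q$-power bookkeeping you describe. The only cosmetic difference is that the paper first factors the common block $(aq^r,q^r/a,q^r;q^d)_{m+1}/(aq^d,q^d/a,q^d;q^d)_{m+1}$ out of the entire sum, which reduces your pairing $f_k+f_{n+m-k}\equiv 0\pmod{\Phi_n(q)^2}$ to the cleaner statement that the residual sum is antisymmetric modulo $\Phi_n(q)$.
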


\begin{proof}
Clearly,
\begin{align}\label{symmetrykey}
&\sum_{k=m+1}^{n-1}(-1)^kq^{d\binom{k+1}{2}-kr}[2dk+r]\f{(aq^r,q^r/a,q^r;q^d)_k}{(aq^d,q^d/a,q^d;q^d)_k}\notag\\
&\quad=\sum_{k=0}^{n-m-2}(-1)^{m+1+k}q^{d\binom{m+k+2}{2}-(m+1+k)r}[2d(m+1+k)+r]\f{(aq^r,q^r/a,q^r;q^d)_{m+1+k}}{(aq^d,q^d/a,q^d;q^d)_{m+1+k}}\notag\\
&\quad=(-1)^{m+1}\f{(aq^r,q^r/a,q^r;q^d)_{m+1}}{(aq^d,q^d/a,q^d;q^d)_{m+1}}\notag\\
&\qquad\cdot\sum_{k=0}^{n-m-2}(-1)^kq^{d\binom{m+k+2}{2}-(m+1+k)r}[2d(m+1+k)+r]\f{(aq^{r+(m+1)d},q^{r+(m+1)d}/a,q^{r+(m+1)d};q^d)_{k}}{(aq^{(m+2)d},q^{(m+2)d}/a,q^{(m+2)d};q^d)_{k}}.
\end{align}
Noting that $(q^r;q^d)_{m+1}$ has the factor $1-q^{r+md}$ and $r+md\eq0\pmod{n}$, we have $(q^r;q^d)_{m+1}\eq0\pmod{\Phi_n(q)}$ and then
\begin{equation}\label{symmetryprod}
(-1)^{m+1}\f{(aq^r,q^r/a,q^r;q^d)_{m+1}}{(aq^d,q^d/a,q^d;q^d)_{m+1}}\eq0\pmod{\Phi_n(q)}.
\end{equation}

Besides, modulo $\Phi_n(q)$ we have
\begin{align}\label{symmetry}
&\sum_{k=0}^{n-m-2}(-1)^kq^{d\binom{m+k+2}{2}-(m+1+k)r}[2d(m+1+k)+r]\f{(aq^{r+(m+1)d},q^{r+(m+1)d}/a,q^{r+(m+1)d};q^d)_{k}}{(aq^{(m+2)d},q^{(m+2)d}/a,q^{(m+2)d};q^d)_{k}}\notag\\
&\eq \sum_{k=0}^{n-m-2}(-1)^kq^{d\binom{m+k+2}{2}-(m+1+k)r}[2d(k+1)-r]\f{(aq^{d},q^{d}/a,q^{d};q^d)_{k}}{(aq^{2d-r},q^{2d-r}/a,q^{2d-r};q^d)_{k}}\notag\\
&\eq(-1)^{n-m-1}\sum_{k=0}^{n-m-2}(-1)^kq^{d\binom{n-k}{2}+(k+1)r+r-2d(k+1)}[2d(k+1)-r]\f{(aq^{d},q^{d}/a,q^{d};q^d)_{n-m-2-k}}{(aq^{2d-r},q^{2d-r}/a,q^{2d-r};q^d)_{n-m-2-k}}.
\end{align}
For $k$ among $0,1,2,\ldots,n-m-2$,
\begin{align}\label{symmetry-1}
&\f{(aq^{d},q^{d}/a,q^{d};q^d)_{n-m-2-k}}{(aq^{2d-r},q^{2d-r}/a,q^{2d-r};q^d)_{n-m-2-k}}\notag\\
&\quad=\f{(aq^{d},q^{d}/a,q^{d};q^d)_{n-m-2}}{(aq^{2d-r},q^{2d-r}/a,q^{2d-r};q^d)_{n-m-2}}\notag\\
&\qquad\cdot\f{q^{3d(kn-km-k(k+1)/2)-3kr}(q^{r-d(n-m-1)}/a,aq^{r-d(n-m-1)},q^{r-d(n-m-1)};q^d)_k}{q^{3d(kn-km-k(k+3)/2)}(q^{(m+2-n)d}/a,aq^{(m+2-n)d},q^{(m+2-n)d};q^d)_k}\notag\\
&\quad\eq \f{(aq^{d},q^{d}/a,q^{d};q^d)_{n-m-2}}{(aq^{2d-r},q^{2d-r}/a,q^{2d-r};q^d)_{n-m-2}}q^{3dk-3kr}\f{(q^d/a,aq^d,q^d;q^d)_k}{(q^{2d-r}/a,aq^{2d-r},q^{2d-r};q^d)_k}\pmod{\Phi_n(q)}.
\end{align}
Meanwhile,
\begin{align}\label{symmetry-2}
&\f{(aq^{d},q^{d}/a,q^{d};q^d)_{n-m-2}}{(aq^{2d-r},q^{2d-r}/a,q^{2d-r};q^d)_{n-m-2}}\notag\\
&\quad\eq\f{(aq^{d},q^{d}/a,q^{d};q^d)_{n-m-2}}{(aq^{(m+2)d},q^{(m+2)d}/a,q^{(m+2)d};q^d)_{n-m-2}}\notag\\
&\quad=\f{\prod_{j=1}^{n-m-2}(1-aq^{dj})(1-q^{dj}/a)(1-q^{dj})}{\prod_{j=1}^{n-m-2}(1-q^{(m+j+1)d}/a)(1-aq^{(m+j+1)d})(1-q^{(m+j+1)d})}\notag\\
&\quad=\f{\prod_{j=1}^{n-m-2}(1-aq^{dj})(1-q^{dj}/a)(1-q^{dj})}{\prod_{j=1}^{n-m-2}(1-q^{(n-m-1-j+m+1)d}/a)(1-aq^{(n-m-1-j+m+1)d})(1-q^{(n-m-1-j+m+1)d})}\notag\\
&\quad\eq\f{\prod_{j=1}^{n-m-2}(1-aq^{dj})(1-q^{dj}/a)(1-q^{dj})}{\prod_{j=1}^{n-m-2}(1-q^{-jd}/a)(1-aq^{-jd})(1-q^{-jd})}\notag\\
&\quad=(-1)^{n-m}q^{3d\binom{n-m-1}{2}}\pmod{\Phi_n(q)}.
\end{align}
Combining \eqref{symmetry}--\eqref{symmetry-2}, we conclude that
\begin{align*}
&\sum_{k=0}^{n-m-2}(-1)^kq^{d\binom{m+k+2}{2}-(m+1+k)r}[2d(m+1+k)+r]\f{(aq^{r+(m+1)d},q^{r+(m+1)d}/a,q^{r+(m+1)d};q^d)_{k}}{(aq^{(m+2)d},q^{(m+2)d}/a,q^{(m+2)d};q^d)_{k}}\\
&\quad\eq0\pmod{\Phi_n(q)}.
\end{align*}
This, together with \eqref{symmetrykey}, \eqref{symmetryprod} and Lemma \ref{symmetry-3}, gives Lemma \ref{modPhi^2}.
\end{proof}

\begin{lemma}\label{liangduan-a}
Under the conditions of Theorem \ref{mainth}, modulo $[n]\Phi_n(q)(a-q^{md+r})(1-aq^{md+r})$, we have
\begin{align}\label{liangduan-aeq}
&\sum_{k=0}^{m}(-1)^kq^{d\binom{k+1}{2}-kr}[2dk+r]\f{(aq^r,q^r/a,q^r;q^d)_k}{(aq^d,q^d/a,q^d;q^d)_k}\notag\\
&\quad\eq \sum_{k=0}^{n-1}(-1)^kq^{d\binom{k+1}{2}-kr}[2dk+r]\f{(aq^r,q^r/a,q^r;q^d)_k}{(aq^d,q^d/a,q^d;q^d)_k},
\end{align}
where $a$ is an indeterminate.
\end{lemma}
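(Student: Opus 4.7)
The plan is to prove that the tail $S := \sum_{k=m+1}^{n-1} T_k$, where $T_k$ denotes the $k$-th summand on either side of \eqref{liangduan-aeq}, vanishes modulo the full modulus. Since
\[
[n]\Phi_n(q)(a-q^{md+r})(1-aq^{md+r}) = \Phi_n(q)^2\cdot\prod_{d'\mid n,\,d'<n}\Phi_{d'}(q)\cdot(a-q^{md+r})(1-aq^{md+r}),
\]
with pairwise coprime factors in $\Z[a][q]$, it suffices to verify divisibility of $S$ by each piece separately.

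Divisibility by $\Phi_n(q)^2$ is exactly Lemma \ref{modPhi^2}. For the two $a$-factors, I would substitute $a=q^{md+r}$ (respectively $a=q^{-(md+r)}$) into an arbitrary tail summand: in the first case the Pochhammer $(q^r/a;q^d)_k$ becomes $(q^{-md};q^d)_k$, whose $j=m$ entry equals $1-q^{0}=0$ for every $k\ge m+1$; in the second case $(aq^r;q^d)_k$ becomes $(q^{-md};q^d)_k$ and vanishes for the same reason. The denominator $(aq^d,q^d/a,q^d;q^d)_k$ stays nonzero at these specializations because $\gcd(n,d)=1$ prevents a competing zero in the range $1\le j\le k\le n-1$. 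Hence every tail summand, and therefore $S$, admits both $(a-q^{md+r})$ and $(1-aq^{md+r})$ as polynomial factors.

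For each proper divisor $d'$ of $n$, I would mirror the argument of Lemma \ref{modPhi^2} at the level of $\Phi_{d'}(q)$. Writing $S = P\cdot\tilde S$ with the boundary prefactor
\[
P = (-1)^{m+1}\frac{(aq^r,q^r/a,q^r;q^d)_{m+1}}{(aq^d,q^d/a,q^d;q^d)_{m+1}}
\]
and the shifted inner sum $\tilde S = \sum_{k=0}^{n-m-2}\tilde T_k$ as in \eqref{symmetrykey}, the factor $1-q^{md+r}$ inside $P$ is divisible by $\Phi_{d'}(q)$ since $d'\mid n\mid md+r$. When the denominator $(q^d;q^d)_{m+1}$ does not absorb this factor (which is the case unless $d'\mid m+1$), this already yields $\Phi_{d'}(q)\mid S$. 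In the exceptional case $d'\mid m+1$, letting $m':=\<-r/d\>_{d'}$ (so that $m\equiv m'\pmod{d'}$ and $d'\mid n-m-1$), the shifted sum splits into $(n-m-1)/d'$ blocks of length $d'$; within each block, an involution $k\leftrightarrow d'-1-k$ combined with the $d'$-reduction of Lemma \ref{symmetry-3} (whose congruence mod $n$ automatically implies the analogous congruence mod $d'$) shows that each block contribution vanishes modulo $\Phi_{d'}(q)$, recovering the missing factor.

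The main obstacle is this final step: in the cancellation case $d'\mid m+1$, one must carefully track the $\Phi_{d'}(q)$-adic poles of individual $\tilde T_k$ coming from $(q^d;q^d)_{m+1+k}$ and verify that they are exactly matched by the zeros produced by the block involution. Once the three divisibilities are in place, coprimality of the factors assembles them into the congruence \eqref{liangduan-aeq}.
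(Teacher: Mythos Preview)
Your decomposition of the modulus and your treatment of the $\Phi_n(q)^2$ piece (via Lemma~\ref{modPhi^2}) and of the two $a$-factors are sound. In fact, your argument for the factors $(a-q^{md+r})$ and $(1-aq^{md+r})$ --- observing that every tail term contains the vanishing Pochhammer $(q^{-md};q^d)_k$ after specializing --- is simpler than the paper's, which invokes Jackson's terminating ${}_6\phi_5$ summation to evaluate the entire sum at $a=q^{\pm(md+r)}$.

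The genuine gap is the treatment of $\Phi_{d'}(q)$ for proper divisors $1<d'<n$. In the ``easy'' case $d'\nmid m+1$ you claim that the zero of the prefactor $P$ automatically gives $\Phi_{d'}(q)\mid S=P\tilde S$. This tacitly assumes that $\tilde S$ has no pole at $\Phi_{d'}(q)$, but individual terms $\tilde T_k$ \emph{do} acquire simple $\Phi_{d'}$-poles from the denominator $(q^{(m+2)d};q^d)_k$: for instance, with $d'=3$, $m\equiv 1\pmod 3$ and $k=1$ one checks that the numerator piece $(q^{r+(m+1)d};q^d)_1$ is a $\Phi_3$-unit while the denominator piece $(q^{(m+2)d};q^d)_1$ carries a factor of $\Phi_3(q)$, and $[2d(m+2)+r]$ does not compensate. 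So one cannot read off $\Phi_{d'}\mid S$ from the prefactor alone; showing that these poles cancel in the sum $\tilde S$ is precisely the content of the congruence you are trying to establish. In the ``exceptional'' case $d'\mid m+1$ you yourself flag the block-involution argument as the main obstacle and do not carry it out; the reduction of Lemma~\ref{symmetry-3} modulo $d'$ does not by itself produce the needed pairing, because the ratio $\tilde T_k/\tilde T_{d'-1-k}$ picks up nontrivial $\Phi_{d'}$-adic units from the $a$-dependent Pochhammers that must also be matched.

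The paper circumvents this entire issue: it does not treat the $\Phi_{d'}(q)$ individually but instead quotes \cite[Lemma~2.2]{GS2020} (with a further limit $b\to 0$) to obtain the congruence modulo $[n]$ in one stroke, and then combines this with Lemma~\ref{modPhi^2} and the $a$-specialization. If you wish to avoid that citation, you would need to supply a full self-contained argument for $S\equiv 0\pmod{\Phi_{d'}(q)}$ for every proper divisor $d'$, which is essentially what \cite[Lemma~2.2]{GS2020} does and is more delicate than the sketch you give.
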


\begin{proof}
By \cite[Lemma 2.2]{GS2020}, under the conditions of Theorem \ref{mainth}, we have
\begin{align*}
&\sum_{k=0}^{m}[2dk+r]\f{(aq^r,q^r/a,q^r/b,q^r;q^d)_k}{(aq^d,q^d/a,bq^d,q^d;q^d)_k}b^kq^{(d-2r)k}\\
&\quad\eq \sum_{k=0}^{n-1}[2dk+r]\f{(aq^r,q^r/a,q^r/b,q^r;q^d)_k}{(aq^d,q^d/a,bq^d,q^d;q^d)_k}b^kq^{(d-2r)k}\pmod{[n]},
\end{align*}
where $a$ and $b$ are indeterminates. Letting $b\to 0$ in the above congruence, we obtain
\begin{align*}
&\sum_{k=0}^{m}(-1)^kq^{d\binom{k+1}{2}-kr}[2dk+r]\f{(aq^r,q^r/a,q^r;q^d)_k}{(aq^d,q^d/a,q^d;q^d)_k}\notag\\
&\quad\eq \sum_{k=0}^{n-1}(-1)^kq^{d\binom{k+1}{2}-kr}[2dk+r]\f{(aq^r,q^r/a,q^r;q^d)_k}{(aq^d,q^d/a,q^d;q^d)_k}\notag\\
&\quad\eq 0\pmod{[n]}.
\end{align*}

Recall that Jackson's $_6\phi_5$ summation formula (cf. \cite[Appendix (II.21)]{GR}) reads
\begin{equation}\label{Jackson}
\sum_{k=0}^N\f{(1-aq^{2k})(a,b,c,q^{-N};q)_k}{(q,aq/b,aq/c,aq^{N+1};q)_k}\l(\f{aq^{N+1}}{bc}\r)^k=\f{(1-a)(aq,aq/bc;q)_N}{(aq/b,aq/c;q)_N}.
\end{equation}
Letting $N\to\infty,\ q\to q^d,\ a=q^r,\ b=aq^r,\ c=q^r/a$ in \eqref{Jackson}, we have
\begin{equation}\label{Jacksonsubs}
\sum_{k=0}^{\infty}(-1)^kq^{d\binom{k+1}{2}-kr}[2dk+r]\f{(aq^r,q^r/a,q^r;q^d)_k}{(aq^d,q^d/a,q^d;q^d)_k}=\f{[r](q^{d-r},q^{d+r};q^d)_{\infty}}{(aq^{d},q^d/a;q^d)_{\infty}}.
\end{equation}
Taking $a=q^{md+r}$ or $a=q^{-md-r}$ in \eqref{Jacksonsubs}, we arrive at
\begin{equation}\label{Jacksonsubstrunc}
\sum_{k=0}^{\infty}(-1)^kq^{d\binom{k+1}{2}-kr}[2dk+r]\f{(q^{-md},q^{md+2r},q^r;q^d)_k}{(q^{md+d+r},q^{d-md-r},q^d;q^d)_k}=\f{[r](q^{d-r},q^{d+r};q^d)_{\infty}}{(q^{d+md+r},q^{d-md-r};q^d)_{\infty}}.
\end{equation}
Note that $(q^{-md};q^d)_k=0$ for $k>m$, and $(q^{md+d+r};q^d)_k\neq0,\ (q^{d-md-r};q^d)_k\neq0$ for any $k\in\N$. Moreover, it is easy to see that
$$
\f{(q^{d-r},q^{d+r};q^d)_{\infty}}{(q^{d+md+r},q^{d-md-r};q^d)_{\infty}}=\f{(q^{d+r};q^d)_m}{(q^{d-md-r;q^d})_m}.
$$
Therefore, \eqref{Jacksonsubstrunc} actually reduces to
$$
\sum_{k=0}^{M}(-1)^kq^{d\binom{k+1}{2}-kr}[2dk+r]\f{(q^{-md},q^{md+2r},q^r;q^d)_k}{(q^{md+d+r},q^{d-md-r},q^d;q^d)_k}=\f{[r](q^{d+r};q^d)_m}{(q^{d-md-r;q^d})_m},
$$
that is, both sides of \eqref{liangduan-aeq} are equal when $a=q^{md+r}$ or $a=q^{-md-r}$. Thus, the $q$-congruence is true modulo $(a-q^{md+r})(1-aq^{md+r})$.

In view of the above and Lemma \ref{modPhi^2}, we conclude the proof.
\end{proof}

By Lemma \ref{liangduan-a} with $a\to 1$ and noting that $1-q^{md+r}$ contains the factor $\Phi_n(q)$, we immediately obtain the following result.
\begin{lemma}\label{truncon}
Under the conditions of Theorem \ref{mainth}, we have
\begin{align*}
&\sum_{k=0}^m(-1)^kq^{d\binom{k+1}{2}-kr}[2dk+r]\f{(q^r;q^d)_k^3}{(q^d;q^d)_k^3}\\
&\quad\eq\sum_{k=0}^{n-1}(-1)^kq^{d\binom{k+1}{2}-kr}[2dk+r]\f{(q^r;q^d)_k^3}{(q^d;q^d)_k^3}\pmod{[n]\Phi_n(q)^3}.
\end{align*}
\end{lemma}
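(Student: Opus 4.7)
The plan is to obtain Lemma \ref{truncon} as a direct specialization of Lemma \ref{liangduan-a} at $a=1$. First I would verify that this specialization is legitimate: in \eqref{liangduan-aeq}, each summand is a rational function in $a$ whose potentially singular denominator factors $(aq^d;q^d)_k$ and $(q^d/a;q^d)_k$ both specialize to $(q^d;q^d)_k$ at $a=1$, which is a polynomial in $q$ coprime to $\Phi_n(q)$ for every $k$ in the summation range, since $\gcd(n,d)=1$ forces $n\nmid jd$ for $1\leq j\leq n-1$. Hence setting $a=1$ is regular on both sides of \eqref{liangduan-aeq}.

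Next I would note that under $a=1$ the ratio $(aq^r,q^r/a,q^r;q^d)_k/(aq^d,q^d/a,q^d;q^d)_k$ reduces to $(q^r;q^d)_k^3/(q^d;q^d)_k^3$, so the two sides of \eqref{liangduan-aeq} become exactly the two sides of the congruence asserted in Lemma \ref{truncon}. Then I would analyze what happens to the modulus: the quantity $[n]\Phi_n(q)(a-q^{md+r})(1-aq^{md+r})$ specializes at $a=1$ to $[n]\Phi_n(q)(1-q^{md+r})^2$. Because $m$ is the least nonnegative residue of $-r/d$ modulo $n$, we have $md+r\equiv 0\pmod{n}$, hence $\Phi_n(q)\mid (1-q^{md+r})$, and therefore $\Phi_n(q)^2\mid (1-q^{md+r})^2$. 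Consequently the specialized modulus is a multiple of $[n]\Phi_n(q)^3$.

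Because this is essentially a one-line consequence of Lemma \ref{liangduan-a}, I do not anticipate any serious obstacle. The only subtleties are confirming that the substitution $a\to 1$ is regular on both sides and that the double factor $(a-q^{md+r})(1-aq^{md+r})$, upon specialization, supplies the extra factor $\Phi_n(q)^2$ needed to boost the modulus from $[n]\Phi_n(q)$ to $[n]\Phi_n(q)^3$; all of the substantive work has already been carried out in Lemma \ref{liangduan-a}.
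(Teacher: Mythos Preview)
Your proposal is correct and is exactly the paper's approach: the paper derives Lemma \ref{truncon} from Lemma \ref{liangduan-a} by setting $a\to 1$ and noting that $\Phi_n(q)\mid(1-q^{md+r})$, so that the specialized modulus $[n]\Phi_n(q)(1-q^{md+r})^2$ is a multiple of $[n]\Phi_n(q)^3$. Your extra remarks on the regularity of the substitution (that the denominators $(aq^d,q^d/a;q^d)_k$ stay coprime to $\Phi_n(q)$ at $a=1$) simply make explicit what the paper leaves implicit.
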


\begin{lemma}\label{A(q)/B(q)}
Let $r,d,n,m$ be defined as in Theorem \ref{mainth}. Write
$$
\f{(q^r;q^d)_m(q^{r+(m+1)d};q^d)_m}{(q^d;q^d)_m^2}=\f{A(q)}{B(q)},
$$
where $A(q)$ and $B(q)$ are relatively prime polynomials in $q$. Then $B(q)$ is relatively prime to $1-q^n$.
\end{lemma}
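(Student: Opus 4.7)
The plan is to show that $\Phi_e(q)\nmid B(q)$ for every divisor $e$ of $n$. Since $1-q^n=\prod_{e\mid n}\Phi_e(q)$, this will immediately imply $\gcd(B(q),1-q^n)=1$. Because $A(q)/B(q)$ is in lowest terms, it suffices to show that for each such $e$, the $\Phi_e(q)$-adic valuation of the numerator $(q^r;q^d)_m(q^{r+(m+1)d};q^d)_m$ is at least its valuation in the denominator $(q^d;q^d)_m^2$.

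First I would use the standard fact that $\Phi_e(q)\mid 1-q^k$ if and only if $e\mid k$, so both valuations reduce to arithmetic-progression counts. The case $e=1$ is immediate, since numerator and denominator are each products of $2m$ factors of the form $1-q^k$ with $k\ge 1$, contributing $2m$ copies of $q-1$ on both sides. For $e>1$ the assumption $\gcd(n,d)=1$ forces $\gcd(e,d)=1$, so $r+id\equiv 0\pmod e$ is equivalent to $i\equiv m_e\pmod e$, where $m_e$ denotes the least nonnegative residue of $-r/d$ modulo $e$. The key compatibility is that $m\equiv -r/d\pmod n$ combined with $e\mid n$ gives $m\equiv m_e\pmod e$; writing $m=qe+m_e$ with $0\le m_e<e$ will then make both counts transparent.

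With this setup, the denominator valuation is obviously $2\lfloor m/e\rfloor=2q$, since $(q^d;q^d)_m^2$ contributes a copy of $\Phi_e(q)$ for each multiple of $e$ in $\{1,\ldots,m\}$, counted twice. On the numerator side, the product $(q^r;q^d)_m(q^{r+(m+1)d};q^d)_m$ covers the factors $1-q^{r+id}$ for $i$ running over $\{0,1,\ldots,2m\}\setminus\{m\}$. A direct count of solutions to $i\equiv m_e\pmod e$ in $\{0,\ldots,2m\}$ gives exactly $2q+1$, namely $i=m_e,\,m_e+e,\,\ldots,\,m_e+2qe$; but $m=qe+m_e$ is itself one of these solutions and is excluded, so the actual count is $2q$. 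Hence the two valuations agree and $\Phi_e(q)\nmid B(q)$.

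The only delicate point—hardly an obstacle—is recognizing that the omitted middle index $i=m$, which is absent because no factor bridges the two numerator $q$-Pochhammer symbols, is automatically a solution of $i\equiv m_e\pmod e$, so the naive count $2q+1$ drops by exactly one and matches the denominator. Once this bookkeeping is in place the valuation comparison is immediate and the lemma follows.
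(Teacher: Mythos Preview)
Your proof is correct and follows essentially the same cyclotomic-valuation argument as the paper: both reduce the claim to showing that for each divisor $e\mid n$ the $\Phi_e$-multiplicity of the numerator is at least that of the denominator, and both compute these multiplicities as counts of indices in an arithmetic progression. Your unified observation that the omitted middle index $i=m$ is automatically a solution of $i\equiv m_e\pmod e$ replaces the paper's two-case split ($t\mid r$ versus $t\nmid r$); two cosmetic points worth fixing are the reuse of the symbol $q$ for the quotient in $m=qe+m_e$, and the claim ``$k\ge 1$'' in the $e=1$ step (the exponents can be negative when $r<0$, but they are all nonzero, which is all you need).
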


\begin{proof}
Clearly,
$$
q^k-1=\begin{cases}\prod_{t\mid k}\Phi_t(q),\quad&\t{if}\ k>0,\\ -q^k\prod_{t\mid k}\Phi_t(q),\quad&\t{if}\ k<0.\end{cases}
$$
Hence, we can write
\begin{align*}
(q^r;q^d)_m&=\pm q^u\prod_{t=1}^{\infty}\Phi_t(q)^{f_t},\\
(q^{r+(m+1)d};q^d)_m&=\pm q^v\prod_{t=1}^{\infty}\Phi_t(q)^{g_t},\\
(q^d;q^d)_m^2&=\prod_{t=1}^{\infty}\Phi_t(q)^{2h_t},
\end{align*}
where $u,v$ are integers, and $f_t,\ g_t$ and $h_t$ denote the numbers of multiples of $t$ in the sets $\{r,r+d,\ldots,r+(m-1)d\},\ \{r+(m+1)d,r+(m+2)d,\ldots,r+2md\}$ and $\{d,2d,\ldots,md\}$, respectively.

Assume $t\mid n$, then $t\mid md+r$. Since $\gcd(d,n)=1$, we have $\gcd(d,t)=1$. Therefore, for each $k\in\N$,
$$
S_k=\{r+ktd,r+(kt+1)d,\ldots,r+(kt+t-1)d\}
$$
is a complete set of residues modulo $t$. This means that for each $k\in\N$, there is exactly one multiple of $t$ in $S_k$. Below we consider two cases.

{\it Case 1}. $t\mid r$.

In this case, $f_t=\lfloor m/t\rfloor$, where $\lfloor x\rfloor$ stands for the greatest integer not exceeding $x$. Similarly, we also have $g_t=h_t=\lfloor m/t\rfloor$. Therefore, $f_t+g_t\geq 2h_t$.

{\it Case 2}. $t\nmid r$.

Now, $f_t=\lfloor (m-1)/t\rfloor$ or $f_t=\lfloor (m-1)/t\rfloor+1$. Since $t\nmid r$ and $t\mid md+r$, we have $t\nmid m$ and $g_t=h_t=\lfloor m/t\rfloor=\lfloor (m-1)/t\rfloor$. Thus, $f_t+g_t\geq 2h_t$.

In view of the above, we are done.
\end{proof}

\begin{lemma}\label{Akq/Bkq}
Let $r,d,n,m$ be defined as in Theorem \ref{mainth}. For $k\in\{1,2,\ldots,m\}$, write
$$
\f{1-q^{md+r}}{1-q^{dk}}=\f{A_k(q)}{B_k(q)},
$$
where $A_k(q)$ and $B_k(q)$ are relatively prime polynomials in $q$. Then $B_k(q)$ is relatively prime to $1-q^n$.
\end{lemma}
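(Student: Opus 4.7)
The plan is to track cyclotomic factors on both sides and show that every $\Phi_t(q)$ with $t\mid n$ which appears in the denominator $1-q^{dk}$ is cancelled by the same factor in the numerator $1-q^{md+r}$. First, using the identity $q^M-1=\prod_{t\mid M}\Phi_t(q)$ for positive $M$ (with a straightforward adjustment by a power of $q$ when $M<0$, and the trivial case $M=0$ handled separately), I would factor both $1-q^{md+r}$ and $1-q^{dk}$ into cyclotomic polynomials. After cancellation, the reduced denominator $B_k(q)$ is, up to a power of $q$, the product of precisely those $\Phi_t(q)$ with $t>0$ satisfying $t\mid dk$ and $t\nmid md+r$.

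Next, I would argue by contradiction. Suppose $B_k(q)$ is not coprime to $1-q^n$; since $1-q^n=-\prod_{t\mid n}\Phi_t(q)$ and distinct cyclotomic polynomials are coprime, there must exist some $t\mid n$ with $\Phi_t(q)\mid B_k(q)$, and the description above forces $t\mid dk$ together with $t\nmid md+r$. But $t\mid n$ combined with the defining congruence $md+r\equiv 0\pmod{n}$ immediately yields $t\mid md+r$, a contradiction.

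Only two edge cases remain, and both are harmless. If $md+r=0$, the rational function is identically zero and there is nothing to check. If $md+r<0$, the factorization of $1-q^{md+r}$ contributes a power of $q$ to the denominator, but $1-q^n$ has constant term $1$ and so is automatically coprime to $q$. Compared with Lemma \ref{A(q)/B(q)}, the main obstacle there---careful multiplicity comparisons via counting multiples of $t$ in various arithmetic progressions---is entirely avoided here, because each $\Phi_t(q)$ occurs with multiplicity at most one in any $1-q^M$, so the bookkeeping collapses to the single divisibility check $t\mid n \Rightarrow t\mid md+r$.
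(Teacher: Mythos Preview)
Your proposal is correct and follows essentially the same approach as the paper's proof. Both arguments rest on the observation that $n\mid md+r$ forces every $\Phi_t(q)$ with $t\mid n$ to divide the numerator $1-q^{md+r}$, while $1-q^{dk}$ factors into \emph{distinct} cyclotomic polynomials, so any such $\Phi_t(q)$ in the denominator is cancelled. Your write-up is more explicit about the edge cases $md+r\le 0$ (and the stray power of $q$), which the paper handles only implicitly via the sign convention introduced in the proof of Lemma~\ref{A(q)/B(q)}.
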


\begin{proof}
Since $n\mid md+r$, $1-q^{md+r}$ contains all irreducible factors of $1-q^n$. Then we arrive at the desired result by noting that $q^{dk}-1$ can be written as a product of different cyclotomic polynomials.
\end{proof}

\begin{lemma}\label{Ckq/Dkq}
Let $r,d,n,m$ be defined as in Theorem \ref{mainth}. For $k=1,2,\ldots,m$, write
$$
\f{[md+r](q^d;q^d)_{k-1}}{(q^{-md};q^d)_k}=\f{C_k(q)}{D_k(q)},
$$
where $C_k(q)$ and $D_k(q)$ are relatively prime polynomials in $q$. Then $D_k(q)$ is relatively prime to $1-q^n$.
\end{lemma}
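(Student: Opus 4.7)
The plan is to follow the cyclotomic-multiplicity strategy used in Lemmas \ref{A(q)/B(q)} and \ref{Akq/Bkq}. First I would convert the Laurent expression in the denominator to polynomial form via the identity
$$(q^{-md};q^d)_k \;=\; (-1)^k\, q^{-dk(2m-k+1)/2}\prod_{i=m-k+1}^{m}(1-q^{id}),$$
so that, up to a sign and a monomial prefactor, the expression under study becomes
$$\frac{[md+r]\,(q^d;q^d)_{k-1}}{\prod_{i=m-k+1}^{m}(1-q^{id})}.$$
As in the preceding two lemmas, for each divisor $t$ of $n$ I would factor the polynomial pieces into cyclotomic polynomials and compare the exponent of $\Phi_t(q)$ in the numerator with that in the denominator.

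Since $\gcd(n,d)=1$, any $t\mid n$ is coprime to $d$, so $\Phi_t(q)$ divides $1-q^{jd}$ precisely when $t\mid j$. From this one reads off that the exponent of $\Phi_t$ in $(q^d;q^d)_{k-1}$ is $\lfloor(k-1)/t\rfloor$, the exponent in $\prod_{i=m-k+1}^{m}(1-q^{id})$ equals the number of multiples of $t$ in the window $\{m-k+1,\ldots,m\}$, and $[md+r]=(1-q^{md+r})/(1-q)$ contributes an additional copy of $\Phi_t$ whenever $t>1$, using $t\mid n\mid md+r$.

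Hence what remains is to check
$$1+\Big\lfloor\frac{k-1}{t}\Big\rfloor \;\ge\; \#\{i\in\{m-k+1,\ldots,m\}:\,t\mid i\}$$
for each relevant $t\mid n$. Using the identity $1+\lfloor(k-1)/t\rfloor=\lceil k/t\rceil$ together with the fact that any window of $k$ consecutive integers contains at most $\lceil k/t\rceil$ multiples of $t$, this inequality holds unconditionally, so $\Phi_t(q)\nmid D_k(q)$ and the desired coprimality follows. The main obstacle will be routine bookkeeping: tracking the monomial $q$-prefactors arising from the Laurent-to-polynomial rewriting, and handling the short case split (depending on the residue of $k$ modulo $t$, and on whether $t\mid r$) used in counting multiples, mirroring the case analysis of Lemma \ref{A(q)/B(q)}. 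No deeper machinery should be required.
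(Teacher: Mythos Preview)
Your approach is essentially the paper's: factor everything into cyclotomic polynomials and, for each $t\mid n$, compare the $\Phi_t$-multiplicities, using that any length-$k$ window of consecutive multiples of $d$ contains at most $\lfloor(k-1)/t\rfloor+1=\lceil k/t\rceil$ multiples of $t$, with the shortfall of at most one made up by the factor $1-q^{md+r}$ in $[md+r]$. Your rewrite $(q^{-md};q^d)_k=(-1)^kq^{-dk(2m-k+1)/2}\prod_{i=m-k+1}^m(1-q^{id})$ simply recasts the paper's count of $\beta_t$, and the case split on $t\mid r$ that you anticipate is not actually needed here (it arose only in Lemma~\ref{A(q)/B(q)}).
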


\begin{proof}
Write
\begin{align*}
(q^d;q^d)_{k-1}&=\pm\prod_{t=1}^{\infty}\Phi_t(q)^{\alpha_t},\\
(q^{-md};q^d)_k&=\pm q^w\prod_{t=1}^{\infty}\Phi_t(q)^{\beta_t},
\end{align*}
where $w$ is an integer, and $\alpha_t,\ \beta_t$ are the numbers of multiples of $t$ in the sets $\{d,2d,\ldots,(k-1)d\},\ \{-md,-md+d,\ldots,-md+(k-1)d\}$, respectively.

Assume $t\mid n$. Similarly as before, we have $\alpha_t=\lfloor(k-1)/t\rfloor$ and $\beta_t\leq \lfloor(k-1)/t\rfloor+1$, which implies $\alpha_t-\beta_t\geq -1$. Then the desired follows from the facts $n\mid md+r$ and $1-q^{md+r}=\pm q^{x}\prod_{t\mid md+r}\Phi_t(q)$, where $x$ is an integer.
\end{proof}

For given numbers $d$ and $r$, as in \cite{G2019}, define the rational functions:
\begin{align*}
F(k,l)&=(-1)^{k+l}q^{d\binom{k-l+1}{2}-rk+rl}[2dk+r]\f{(q^r;q^d)_k^2(q^r;q^d)_{k+l}}{(q^d;q^d)_k^2(q^d;q^d)_{k-l}(q^r;q^d)_l^2},\\
G(k,l)&=(-1)^{k+l}q^{d\binom{k-l+1}{2}-rk+rl}\f{(q^r;q^d)_k^2(q^r;q^d)_{k+l-1}}{(1-q)(q^d;q^d)_{k-1}^2(q^d;q^d)_{k-l}(q^r;q^d)_l^2},
\end{align*}
where $1/(q^d;q^d)_k=0$ for any negative integer $k$. These two functions form a $q$-WZ pair (cf. \cite{PWZ}). That is, for any $k\in\N$ and $l\in\Z^+$,
\begin{equation}\label{WZ}
F(k,l-1)-F(k,l)=G(k+1,l)-G(k,l).
\end{equation}

\begin{lemma}\label{Fmm}
Under the conditions of Theorem \ref{mainth}, we have
\begin{equation}\label{Fmmeq}
F(m,m)\eq (-1)^mq^{d\binom{m}{2}+mr}[md+r]\l(1-[md+r]^2\sum_{k=1}^m\f{q^{dk}}{[dk]^2}\r)\pmod{[n]\Phi_n(q)^3}.
\end{equation}
\end{lemma}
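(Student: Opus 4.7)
The plan is to substitute $k=l=m$ into the definition of $F(k,l)$ and then Taylor-expand the resulting rational expression about the point $q^{md+r}=1$. Direct substitution gives
\[
F(m,m)=[2dm+r]\,\f{(q^r;q^d)_m(q^{md+r};q^d)_m}{(q^d;q^d)_m^2}.
\]
With $\epsilon:=q^{md+r}-1=-(1-q)X$ and $X:=[md+r]$, the hypothesis $n\mid md+r$ forces $[n]\mid X$, so $X^4$ is divisible by $[n]^4$ and hence by $[n]\Phi_n(q)^3$; every term of order $X^4$ can therefore be discarded, and it suffices to track each factor to order $\epsilon^3$.

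The identity $[a+b]=[a]+q^a[b]$ gives $[2dm+r]=[md]+q^{md}X$ exactly. For $(q^r;q^d)_m$, writing $q^{r+jd}=(1+\epsilon)q^{(j-m)d}$ and collecting signs yields
\[
\f{(q^r;q^d)_m}{(q^d;q^d)_m}=(-1)^mq^{-dm(m+1)/2}\l[1-X\tilde V_1+X^2\tilde V_2+O(X^3)\r]
\]
with $\tilde V_1=\sum_{l=1}^m 1/[ld]$ and $\tilde V_2=\sum_{1\le l<l'\le m}1/([ld][l'd])$. Isolating the $j=0$ factor $1-q^{md+r}=-\epsilon$ of $(q^{md+r};q^d)_m$ gives
\[
\f{(q^{md+r};q^d)_m}{(q^d;q^d)_m}=\f{X}{[md]}\l[1+XW_1+X^2W_2+O(X^3)\r]
\]
with $W_1=\sum_{k=1}^{m-1}q^{kd}/[kd]$ and $W_2=\sum_{1\le j<k\le m-1}q^{(j+k)d}/([jd][kd])$.

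Multiplying the three factors, the $X$-coefficient inside the bracket reduces via the elementary identity $(1-q^{kd})/[kd]=1-q$ to exactly $-m(1-q)$. The bulk of the work sits at the $X^2$-coefficient: splitting $\tilde V_1 W_1$ along the diagonal $l=k$ versus off-diagonal pairs, recombining the off-diagonal cross terms with $W_2+\tilde V_2$ into double sums of the form $\sum(1-q^{jd})(1-q^{kd})/([jd][kd])$, applying $(1-q^{kd})/[kd]=1-q$ throughout, and invoking the combinatorial identity $\binom{m-1}{2}+(m-1)=\binom{m}{2}$, gives the compact form $\binom{m}{2}(1-q)^2-\sum_{k=1}^m q^{dk}/[dk]^2$.

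It remains to convert the prefactor $(-1)^mq^{-dm(m+1)/2}$ into the target $(-1)^m q^{d\binom{m}{2}+mr}$; these differ by $q^{-m(md+r)}=(1+\epsilon)^{-m}=1+m(1-q)X+\binom{m+1}{2}(1-q)^2X^2+O(X^3)$. Multiplying through, the new cross-terms will exactly kill the $-m(1-q)$ at order $X^1$ (since $m(1-q)+A=0$) and, at order $X^2$, produce $\binom{m+1}{2}(1-q)^2-m^2(1-q)^2=-\binom{m}{2}(1-q)^2$ which cancels the $\binom{m}{2}(1-q)^2$ piece above, leaving precisely $1-X^2\sum_{k=1}^m q^{dk}/[dk]^2$ inside the bracket. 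The main technical obstacle I foresee is controlling the rational denominators $[kd]$ when $\gcd(k,n)>1$, since they can then share cyclotomic factors with $[n]$; here Lemmas \ref{A(q)/B(q)}--\ref{Ckq/Dkq} intervene, guaranteeing that the particular rational combinations that survive in the final formula have denominators coprime to $1-q^n$ and are therefore invertible modulo $[n]\Phi_n(q)^3$.
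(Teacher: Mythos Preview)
Your expansion is correct and reaches the goal, but it takes a noticeably different route from the paper.  The paper first rewrites
\[
F(m,m)=[md+r]\,\frac{(q^r;q^d)_m(q^{r+(m+1)d};q^d)_m}{(q^d;q^d)_m^2}
\]
(pulling out the factor $1-q^{2md+r}$ together with $1-q^{md+r}$), and then pairs the $j$-th factor of each $q$-Pochhammer symmetrically:
\[
(1-q^{jd-md-r})(1-q^{jd+md+r})\equiv(1-q^{jd})^2-q^{jd}(1-q)^2[md+r]^2\pmod{\Phi_n(q)^3}.
\]
The linear contribution in $[md+r]$ cancels \emph{inside each paired factor}, so the product is immediately $(q^d;q^d)_m^2\bigl(1-[md+r]^2\sum_{k}q^{dk}/[dk]^2\bigr)$ with no further algebra.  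Your decomposition $[2dm+r]\cdot(q^r;q^d)_m(q^{md+r};q^d)_m$ keeps an asymmetric split, so you must track three separate expansions, verify the order-$X$ cancellation by hand, and then organize the six cross-terms at order $X^2$; the conversion $(1+\epsilon)^{-m}$ is an extra step the paper avoids entirely because its prefactor is already $(-1)^mq^{d\binom{m}{2}+mr}$.  Both arguments arrive at the same place; the paper's pairing is shorter and more transparent.

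One point that deserves tightening: the sentence ``$X^4$ is divisible by $[n]^4$ and hence by $[n]\Phi_n(q)^3$; every term of order $X^4$ can therefore be discarded'' is not literally valid, since your Taylor coefficients carry denominators $[kd]$ which may share factors $\Phi_t(q)$ (for proper divisors $t\mid n$) with $[n]$.  What is true is that those denominators are always coprime to $\Phi_n(q)$ (because $k\le m<n$), so your expansion establishes the congruence modulo $\Phi_n(q)^4$; the congruence modulo $[n]$ follows separately because both sides are visibly divisible by $[md+r]$, with the remaining factors having denominators coprime to $1-q^n$ by Lemmas~\ref{A(q)/B(q)} and~\ref{Akq/Bkq}.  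Combining the two moduli gives $[n]\Phi_n(q)^3$.  (Lemma~\ref{Ckq/Dkq} is not needed here; it is used only for $G(m+1,k)$.)
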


\begin{proof}
Clearly,
$$
F(m,m)=[2md+r]\f{(q^r;q^d)_{2m}}{(q^d;q^d)_m^2}=[md+r]\f{(q^r;q^d)_m(q^{r+(m+1)d};q^d)_m}{(q^d;q^d)_m^2}.
$$
In light of Lemmas \ref{A(q)/B(q)}, \ref{Akq/Bkq} and the fact that $[n]\mid [md+r]$, both sides of \eqref{Fmmeq} are congruent modulo $[n]$. It remains to show \eqref{Fmmeq} holds modulo $\Phi_n(q)^4$.

Since $q^{md+r}\eq1\pmod{\Phi_n(q)}$, we may write
$$
q^{md+r}=1+t(q)\Phi_n(q),
$$
where $t(q)$ is a rational function in $q$ whose denominator is relatively prime to $\Phi_n(q)$. Therefore,
$$
q^{-md-r}=\f{1}{1+t(q)\Phi_n(q)}\eq 1-t(q)\Phi_n(q)+t(q)^2\Phi_n(q)^2\pmod{\Phi_n(q)^3},
$$
and then we have
\begin{align*}
&(q^r;q^d)_m(q^{md+r+d};q^d)_m\\
&\quad=(-1)^mq^{d\binom{m}{2}+mr}\prod_{j=1}^m(1-q^{-md-r+jd})(1-q^{md+r+jd})\\
&\quad\eq (-1)^mq^{d\binom{m}{2}+mr}\prod_{j=1}^m (1-q^{jd}-q^{jd}t(q)\Phi_n(q))(1-q^{jd}+q^{jd}t(q)\Phi_n(q)-q^{jd}t(q)^2\Phi_n(q)^2)\\
&\quad\eq (-1)^mq^{d\binom{m}{2}+mr}\prod_{j=1}^m\l((1-q^{jd})^2-q^{jd}t(q)^2\Phi_n(q)^2\r)\\
&\quad\eq (-1)^mq^{d\binom{m}{2}+mr}\prod_{j=1}^m(1-q^{jd})^2\l(1-t(q)^2\Phi_n(q)^2\sum_{k=1}^m\f{q^{dk}}{(1-q^{dk})^2}\r)\\
&\quad= (-1)^mq^{d\binom{m}{2}+mr}(q^d;q^d)_m^2\l(1-[md+r]^2\sum_{k=1}^m\f{q^{dk}}{[dk]^2}\r)\pmod{\Phi_n(q)^3}.
\end{align*}
Noticing that $[md+r]\eq0\pmod{\Phi_n(q)}$, we obtain
$$
F(m,m)\eq [md+r](-1)^mq^{d\binom{m}{2}+mr}\l(1-[md+r]^2\sum_{k=1}^m\f{q^{dk}}{[dk]^2}\r)\pmod{\Phi_n(q)^4}.
$$
This concludes the proof.
\end{proof}

\begin{lemma}\label{G(m+1,k)}
Under the conditions of Theorem \ref{mainth}, for $k=1,2,\ldots,m$, we have
\begin{equation}\label{G(m+1,k)eq}
G(m+1,k)\eq \f{q^{-mdk}[md+r]^3(q^d;q^d)_k}{[dk][dk-(m+1)d](q^{-md};q^d)_k}\pmod{[n]\Phi_n(q)^3}.
\end{equation}
\end{lemma}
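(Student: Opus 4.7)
The plan is as follows. First, I expand $G(m+1,k)$ from its definition and extract the common factor $[md+r]^3$ from the numerator. Using $(q^r;q^d)_{m+1}=(q^r;q^d)_m(1-q^{md+r})$ and $(q^r;q^d)_{m+k}=(q^r;q^d)_{m+1}(q^{(m+1)d+r};q^d)_{k-1}$, the numerator rewrites as $(q^r;q^d)_m^3(1-q^{md+r})^3(q^{(m+1)d+r};q^d)_{k-1}$, and since $(1-q^{md+r})^3=(1-q)^3[md+r]^3$ this isolates the $[md+r]^3$ that matches the right-hand side of \eqref{G(m+1,k)eq}, leaving an extra $(1-q)^2$ (after cancelling the $(1-q)$ already in the denominator of $G$).

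Second, I decompose the modulus as $[n]\Phi_n(q)^3=\Phi_n(q)^4\prod_{d'\mid n,\,1<d'<n}\Phi_{d'}(q)$ and argue locally. Because $n\mid md+r$, each such $\Phi_{d'}(q)$ divides $[md+r]$, so $[md+r]^3$ is divisible by $\Phi_{d'}^3$ and both sides of \eqref{G(m+1,k)eq} already vanish modulo $\Phi_{d'}$; the congruence is trivial at every $\Phi_{d'}$ with $1<d'<n$. A direct check (modelled on Lemmas \ref{A(q)/B(q)}--\ref{Ckq/Dkq}, using $1\le k\le m<n$ and $\gcd(d,n)=1$) shows that the cofactors $V_L:=G(m+1,k)/[md+r]^3$ and $V_R$ (RHS$/[md+r]^3$) are both $\Phi_n(q)$-units. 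So it remains to prove $V_L\equiv V_R\pmod{\Phi_n(q)}$.

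Third, I carry out the mod-$\Phi_n(q)$ verification using $q^{md+r}\equiv1$ (equivalently $q^{md}\equiv q^{-r}$, $q^{-md}\equiv q^r$). This gives the reductions $(q^{(m+1)d+r};q^d)_{k-1}\equiv(q^d;q^d)_{k-1}$, $(q^{-md};q^d)_k\equiv(q^r;q^d)_k$, $q^{-mdk}\equiv q^{rk}$, and the master identity
\[
(q^r;q^d)_j\equiv(-1)^j q^{-d\bigl(\binom{m+1}{2}-\binom{m-j+1}{2}\bigr)}\frac{(q^d;q^d)_m}{(q^d;q^d)_{m-j}}\pmod{\Phi_n(q)}\qquad(0\le j\le m),
\]
obtained by replacing each $1-q^{r+id}$ by $1-q^{(i-m)d}$ and factoring $-q^{-sd}$ out of $1-q^{-sd}$. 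Combining these with $1-q^{dk-(m+1)d}=-q^{-(m+1-k)d}(1-q^{(m+1-k)d})$ (which simplifies $[dk-(m+1)d]$ and contributes the missing sign), both $V_L$ and $V_R$ collapse to $-(-1)^k(1-q)^2\cdot\frac{(q^d;q^d)_{k-1}(q^d;q^d)_{m-k}^2}{(q^d;q^d)_m(q^d;q^d)_{m+1-k}}$ times a $q$-power, and a short calculation based on $\binom{m-k+2}{2}-\binom{m-k+1}{2}=m-k+1$ shows the difference of the two $q$-exponents equals $-(m+1)(dm+r)$, which is a multiple of $n$ by hypothesis. Hence $V_L/V_R\equiv q^{-(m+1)(dm+r)}\equiv1\pmod{\Phi_n(q)}$.

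The main obstacle will be the $q$-exponent bookkeeping in this last step: four separate sources contribute $q$-powers -- the overall prefactor $q^{d\binom{m-k+2}{2}-r(m+1)+rk}$ of $G$, the $q^{-mdk}$ on the RHS, the $-q^{-(m+1-k)d}$ that appears when simplifying $1-q^{dk-(m+1)d}$, and each invocation of the master identity for $(q^r;q^d)_j$ -- and they must be collected and telescoped to exactly $-(m+1)(dm+r)$. This is not conceptually hard but requires disciplined use of the identity $\binom{a+1}{2}-\binom{a}{2}=a$ and is reminiscent of the exponent congruence recorded in Lemma \ref{symmetry-3}.
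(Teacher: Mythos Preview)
Your proposal is correct and follows essentially the same route as the paper's own proof. Both arguments extract the factor $(1-q^{md+r})^3=(1-q)^3[md+r]^3$ from $G(m+1,k)$, dispose of the modulus $[n]$ by observing (via the cyclotomic multiplicity counts of Lemmas~\ref{A(q)/B(q)}--\ref{Ckq/Dkq}) that both sides already vanish there, and then handle the remaining factor $\Phi_n(q)^4$ by reducing the cofactors with $q^{md+r}\equiv1$ and checking the resulting $q$-exponent is a multiple of $n$. The only cosmetic difference is organizational: the paper transforms $G(m+1,k)$ directly into the right-hand side modulo $\Phi_n(q)^4$ (reducing $(q^r;q^d)_m^3/(q^d;q^d)_m^3$ to $(-1)^m q^{-3d\binom{m+1}{2}}$ and rewriting $(q^{(m-k+2)d};q^d)_{k-1}$ in terms of $(q^{-md};q^d)_{k-1}$), whereas you reduce $V_L$ and $V_R$ separately to a common normal form and compare. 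One small remark: depending on where you apply $q^{-mdk}\equiv q^{rk}$, the exponent difference you obtain may come out as $(k-m-1)(md+r)$ rather than exactly $-(m+1)(md+r)$, but of course any integer multiple of $md+r$ is $\equiv0\pmod{n}$, so the conclusion is unaffected.
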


\begin{proof}
By Lemma \ref{Ckq/Dkq} and via a similar argument as the proof of Lemma \ref{Akq/Bkq}, we know that the right-hand side of \eqref{G(m+1,k)eq} is congruent to $0$ modulo $[n]$. On the other hand,
\begin{align}\label{G(m+1,k)reduce}
G(m+1,k)&=(-1)^{m+1+k}q^{d\binom{m+2-k}{2}-r(m+1)+rk}\f{(q^r;q^d)_{m+1}^2(q^r;q^d)_{m+k}}{(1-q)(q^d;q^d)_m^2(q^d;q^d)_{m+1-k}(q^r;q^d)_k^2}\notag\\
&=(-1)^{m+1+k}q^{d\binom{m+2-k}{2}-r(m+1)+rk}\f{(1-q^{md+r})^3(q^r;q^d)_m^3}{(1-q)(q^d;q^d)_m^2}\notag\\
&\quad\times\f{(q^{(m+1)d+r};q^d)_{k-1}}{(q^d;q^d)_{m+1-k}(q^r;q^d)_k^2}\notag\\
&=(-1)^{m+1+k}q^{d\binom{m+2-k}{2}-r(m+1)+rk}\f{(1-q^{md+r})^3(q^r;q^d)_m^3}{(1-q)(q^d;q^d)_m^3}\notag\\
&\quad\times\f{(q^{(m+1)d+r};q^d)_{k-1}(q^{(m-k+2)d};q^d)_{k-1}}{(q^r;q^d)_k^2}.
\end{align}
Via similar arguments as the ones in the proofs of Lemmas \ref{A(q)/B(q)} and \ref{Ckq/Dkq}, it is not hard to see $G(m+1,k)\eq0\pmod{[n]}$. Therefore, both sides of \eqref{G(m+1,k)eq} are congruent modulo $[n]$.

It suffices to show \eqref{G(m+1,k)eq} holds modulo $\Phi_n(q)^4$. Note that
$$
\f{(q^r;q^d)_m^3}{(q^d;q^d)_m^3}\eq \f{(q^{-md};q^d)_m^3}{(q^d;q^d)_m^3}=\l(\prod_{j=1}^{m}\f{1-q^{-md+(m+1-j-1)d}}{1-q^{jd}}\r)^3=(-1)^mq^{-3d\binom{m+1}{2}}\pmod{\Phi_n(q)}
$$
and
$$
(q^{(m-k+2)d};q^d)_{k-1}=\prod_{j=1}^{k-1}(1-q^{(m-k+j+1)d})=\prod_{j=1}^{k-1}(1-q^{(m-j+1)d})=(-1)^{k-1}q^{md(k-1)-d\binom{k-1}{2}}(q^{-md};q^d)_{k-1}.
$$
Thus, by \eqref{G(m+1,k)reduce}, we have
\begin{align*}
G(m+1,k)&\eq q^{d\binom{m+2-k}{2}-3d\binom{m+1}{2}-d\binom{k-1}{2}-rm}\f{(1-q^{md+r})^3(q^d;q^d)_{k-1}(q^{-md};q^d)_{k-1}}{(1-q)(q^{-md};q^d)_{k}^2}\\
&=q^{d\binom{m+2-k}{2}-3d\binom{m+1}{2}-d\binom{k-1}{2}-rm}\f{[md+r]^3(q^d;q^d)_k}{[dk][dk-(m+1)d](q^{-md};q^d)_k}\pmod{\Phi_n(q)^4}.
\end{align*}
Since
$$
d\binom{m+2-k}{2}-3d\binom{m+1}{2}-d\binom{k}{2}-rm=-mdk -  m^2d - m r\eq-mdk\pmod{n},
$$
\eqref{G(m+1,k)eq} holds modulo $\Phi_n(q)^4$. This proves the desired result.
\end{proof}

\begin{lemma}\label{identity}
For any positive integer $n$, we have
\begin{equation}\label{identityeq}
(-1)^nq^{\binom{n+1}{2}}\sum_{k=1}^n\f{q^{-nk}(q;q)_k}{[k][k-n-1](q^{-n};q)_k}=\sum_{k=1}^n\f{(-1)^kq^{\binom{k+1}{2}}(1+q^k)}{[k]^2}+\sum_{k=1}^n\f{q^k}{[k]^2}.
\end{equation}
\end{lemma}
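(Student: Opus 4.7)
The plan is to transform the left-hand side of \eqref{identityeq} into a much cleaner form, and then pass to induction on $n$.

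Combining the identity $(q^{-n};q)_k=(-1)^kq^{\binom{k}{2}-nk}(q;q)_n/(q;q)_{n-k}$ with $[k-n-1]=-q^{k-n-1}[n+1-k]$ and $(q;q)_k(q;q)_{n-k}/(q;q)_n=1/\binom{n}{k}_q$, each summand on the left collapses to
\begin{equation*}
\f{q^{-nk}(q;q)_k}{[k][k-n-1](q^{-n};q)_k}=\f{(-1)^{k+1}q^{n+1-\binom{k+1}{2}}}{[k][n+1-k]\binom{n}{k}_q}.
\end{equation*}
After re-indexing $k\mapsto n+1-k$ and applying the elementary relation $[k]\binom{n}{k}_q=[n+1-k]\binom{n}{k-1}_q$, the entire left-hand side of \eqref{identityeq} becomes $\sum_{k=1}^{n}(-1)^kq^{(n+1)k-\binom{k}{2}}/([k]^2\binom{n}{k}_q)$.

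With the left-hand side in this compact form, I would induct on $n$. The base case $n=1$ reduces to checking $-q^2=-q(1+q)+q$, which is immediate. For the step, write $f(n,k)$ for the simplified summand. Using the elementary identity $q^k[n-k]-[n]=-[k]$, a straightforward computation gives
\begin{equation*}
f(n,k)-f(n-1,k)=\f{(-1)^{k+1}q^{nk-\binom{k}{2}}}{[n][n-k]\binom{n-1}{k-1}_q}\qquad(1\le k\le n-1),
\end{equation*}
while the new top term is $f(n,n)=(-1)^nq^{\binom{n+1}{2}+n}/[n]^2$. Comparing with the (trivial) difference $R(n)-R(n-1)=((-1)^nq^{\binom{n+1}{2}}(1+q^n)+q^n)/[n]^2$ of the right-hand side, and using the symmetry $[n-k]\binom{n-1}{k-1}_q=[k]\binom{n-1}{k}_q$, the induction step reduces to the auxiliary identity
\begin{equation*}
\sum_{k=1}^{n-1}\f{(-1)^{k+1}q^{nk-\binom{k}{2}}[n]}{[k]\binom{n-1}{k}_q}=(-1)^nq^{\binom{n+1}{2}}+q^n.
\end{equation*}

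Converting the $q$-binomials to $q$-Pochhammer symbols and shifting $j=k-1$ shows that this auxiliary identity is in turn equivalent to the compact closed form
\begin{equation*}
\sum_{j=0}^{N}\f{(-1)^jq^{(N+1)j-\binom{j}{2}}}{\binom{N}{j}_q}=\f{(1+(-1)^Nq^{\binom{N+2}{2}})[N+1]}{[N+2]}\qquad(N=n-2).
\end{equation*}
This is the hard step and the main obstacle of the whole argument. The parity-dependent factor $1+(-1)^Nq^{\binom{N+2}{2}}$ on the right-hand side blocks any single application of a classical Jackson- or Dixon-type $_6\phi_5$ summation, so a uniform proof really requires creative telescoping: this is exactly the step that Schneider's \verb"Sigma" package handles algorithmically in the paper.
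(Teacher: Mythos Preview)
Your approach coincides with the paper's: both rewrite the left-hand side as a sum over reciprocals of $q$-binomial coefficients and then argue by induction on $n$, with the inductive step (equivalently, the recurrence $S_{n+1}-S_n=\cdots$) certified by the {\tt Sigma} package. Your further reduction of that step to the closed-form auxiliary identity $\sum_{j=0}^{N}(-1)^jq^{(N+1)j-\binom{j}{2}}\big/\qbinom{N}{j}{q}=(1+(-1)^Nq^{\binom{N+2}{2}})[N+1]/[N+2]$ is correct and is precisely a restatement of the same recurrence the paper hands to {\tt Sigma}, so the two arguments rest on the identical computer-certified core.
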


\begin{proof}
Recall that the $q$-binomial coefficient
$$\qbinom{n}{k}{q}=\f{(q;q)_n}{(q;q)_k(q;q)_{n-k}}.$$
It is easy to see that
\begin{align*}
\f{(q;q)_k}{(q^{-n};q)_k}=\f{(q;q)_k(q^{-n+k};q)_{n-k}}{(q^{-n};q)_n}=\f{(-1)^kq^{((2n+1)k-k^2)/2} (q;q)_k(q;q)_{n-k}}{(q;q)_n}=\f{(-1)^kq^{((2n+1)k-k^2)/2}}{\qbinom{n}{k}{q}}.
\end{align*}
Therefore, we may write the left-hand side of \eqref{identityeq} as
$$
S_n:=(-1)^nq^{\binom{n+1}{2}}\sum_{k=1}^{n}\f{(-1)^kq^{(1-k)k/2}}{[k][k-n-1]\qbinom{n}{k}{q}}.
$$
Using the symbolic summation package \verb"Sigma" developed by Schneider \cite{S} in Mathematica, we find $S_n$ satisfies the following recurrence relation:
$$
S_{n+1}-S_n=\f{q^{n+1}}{[n+1]^2}+\f{(-1)^{n+1}q^{\binom{n+2}{2}}(1+q^{n+1})}{[n+1]^2}.
$$
Clearly, the sum on the right-hand side of \eqref{identityeq} satisfies the same recurrence relation. Moreover, it can be directly verified that the both sides of \eqref{identityeq} are equal when $n=1$. This concludes the proof.
\end{proof}

\noindent{\it Proof of Theorem \ref{mainth}}. In view of Lemma \ref{truncon}, we only need to prove Theorem \ref{mainth} for $M=m$. Summing both sides of \eqref{WZ} over $k$ from $0$ to $m$, we get
\begin{equation}\label{WZ1}
\sum_{k=0}^{m}F(k,l-1)-\sum_{k=0}^mF(k,l)=G(m+1,l)-G(0,l)=G(m+1,l).
\end{equation}
Then, summing \eqref{WZ1} over $l$ from $1$ to $m$ and observing that $F(k,m)=0$ for $k<m$, we arrive at
\begin{equation}\label{WZkey}
\sum_{k=0}^mF(k,0)=F(m,m)+\sum_{k=1}^mG(m+1,k).
\end{equation}
By Lemma \ref{G(m+1,k)} and Lemma \ref{identity} with $q\to q^d$, modulo $[n]\Phi_n(q)^3$, we obtain
$$
\sum_{k=1}^mG(m+1,k)\eq (-1)^mq^{-d\binom{m+1}{2}}[md+r]^3\l(\sum_{k=1}^m\f{(-1)^kq^{d\binom{k+1}{2}}(1+q^{dk})}{[dk]^2}+\sum_{k=1}^m\f{q^{dk}}{[dk]^2}\r).
$$
Then, by Lemma \ref{Fmm}, we finally completes the proof.\qed

\begin{Acks}
This work is partially supported by the National Natural Science Foundation of China (grant 12201301) and the Jiangsu Province Student Innovation Training Program (grant 202410298185Y).
\end{Acks}

\end{document}